\newtheorem{thm}{Theorem}[section]
\newtheorem{lem}[thm]{Lemma}
\theoremstyle{definition}
\newtheorem{claim}[thm]{Claim}
\renewcommand{\Re}{\mathbb R}
\renewcommand{\epsilon}{\varepsilon}
\newcommand{\C}{\mathcal C}
\renewcommand{\phi}{\varphi}
\newcommand{\noshow}[1]{}
\newcommand{\cardin}[1]{\lvert {#1} \rvert}
\newcommand{\E}{\mathcal{E}}
\newcommand{\cf}{{{\chi_{\text{\textup{cf}}}}}}
\begin{document}
\title{Hypergraphical Clustering Games of Mis-Coordination%
\footnote{An earlier version of this paper was circulated under the title ``The Price of Anarchy in Hypergraph Coloring Games".} }
\author{Rann Smorodinsky\thanks{Faculty of Industrial Engineering and Management, Technion Israel. rann@ie.technion.ac.il. Research was partially supported by the United States-Israel Binational Science Foundation and National Science Foundation grant 2016734, the German-Israel Foundation grant I-1419-118.4/2017, the Ministry of Science and Technology grant 19400214, Technion VPR grants and the Bernard M. Gordon Center for Systems Engineering at the Technion.} \and Shakhar
Smorodinsky\thanks{Department of Mathematics, Ben-Gurion University of the
NEGEV, Be'er-Sheva Israel. shakhar@math.bgu.ac.il. Research was partially supported by Grant 635/16
from the Israel Science Foundation.}}

\date{}
\maketitle

\begin{abstract}
We introduce and motivate the study of hypergraphical clustering games of mis-coordination. For two specific variants we prove the existence of a pure Nash equilibrium and provide bounds on the price of anarchy as a function of the cardinality of the action set and the size of the hyperedges.
\end{abstract}

\section{Introduction}

In many strategic scenarios players are involved in a variety of games simultaneously and players are not necessarily involved in all games. When these games are interrelated in the sense that players choose their action simultaneously for all games  and a player's overall utility is the sum over the utilities in each game then the collection of games forms a succinct representation of a larger game. Papadimitriou and Roughgarden \cite{Papadimitriou:2008:CCE:1379759.1379762} coin the term `hypergraphical game' for such games. Given a hypergraph, each vertex represents a player while each hyperedge represents the set of participants of a constituent game.

If, in addition, each constituent game (over a hyperedge) is symmetric and respects permutation over the action set (renaming of actions does not matter) we refer to such games as {\em hypergraphical clustering games}. For hypergraphical clustering games we will often refer to actions as colors and a cluster is just a set of players that share the same color.
Restricting attention furthermore to utility functions that hinge on whether a player's action (or color) is different from those chosen by his peers we get {\em hypergraphical clustering games of mis-coordination}.%
\footnote{In a similar vein, when a player's utility function hinges on whether the player's color is the same as those chosen by his peers we get {\em hypergraphical clustering games of coordination}.}

Hypergraph clustering games of mis-coordination come up in a variety of settings:
\begin{itemize}
\item
Consider a frequency allocation setting, where individual radio stations (the vertices) must choose their broadcasting frequency (color). Consider an arbitrary listener at an arbitrary geographical point. This listener is within communication range of a subset of the stations (a hyperedge). The broadcast quality of a station to the listener depends on how different its frequency is from those chosen by others who are within the listener's range.
\item
An individual  belongs to various social circles (family, work, childhood friends, etc.) and must choose some property from some given finite collection, say his profession or address. He derives utility from each social circle depending on whether or not his choice is the same as others in the circle. It is quite natural to assume that an individual would prefer to be co-located with peers from the various circles (coordination) but may want to stand out in his profession (mis-coordination).
 \item
A firm is involved in various markets (horizontally or vertically). The firm must choose a branding strategy (equivalently, an accounting policy, a critical supplier or an IP strategy).  The utility from each of these depends on how close or far its choice is from the competition in each of the relevant markets. This is a hypergraph clustering game where each vertex represents a firm, a hyperedge represents a market and the set of colors is the set of available branding strategies.%
\footnote{Anecdotally, a critical component of a branding strategy in consumer facing markets are actual colors. For example, the cellular service providers in Israel are each associated with its unique color.}
\item
In an aerial transportation problem, each plane traverses space in a certain route composed of segments. Each plane must choose a cruising altitude with a clear preference not to share its altitude with other planes that simultaneously share the segment. In this setting, vertices represent planes, hyperedges represent segments and colors represent altitudes.
\end{itemize}

When we further restrict attention to models where the underlying constituent games (the hyperedges) are two-player games we refer to these as `clustering games'. This class has been studied in the recent decade and we further discuss this in Section \ref{lit_survey}.
Defining mis-coordination in the simple graph setting is straightforward. A vertex (player) gains utility only at edges where the corresponding neighbor chose a different color. Moving from edges to hyperedges allows for some flexibility in the notion of mis-coordination. To make the distinction from the graph setting let us refer to vertices  who seek to mis-coordinate their color as homogeneity-averse agents.

We consider two variants of homogeneity-aversion:%
\begin{itemize}
\item
A weak notion of homogeneity-aversion is when each vertex gains, at a given hyperedge, when there is at least one other vertex in the hyperedge with a different color.
\item
A strong notion of homogeneity-aversion is when a vertex gains, at a given hyperedge, when its color is different from the colors of all other vertices in that hyperedge.
\end{itemize}

Note that both notions agree over simple graphs.

In this paper we study the price of anarchy (PoA) of hypergraph clustering games of mis-coordination. The PoA, introduced in \cite{DBLP:journals/csr/KoutsoupiasP09}, has become the de-facto standard for measuring the inefficiency resulting from delegating the decision on color choice to the players (the vertices) as opposed to centrally assigning them.
Since its introduction the PoA has been extensively studied for many families of games. To the best of our knowledge this is the first paper to study the PoA in hypergraphical games.

\subsection{Related literature}
\label{lit_survey}

The literature on hypergraphical games and in particular hypergraph clustering games (going beyond simple graphs) is very limited. As already mentioned, Papadimitriou and Roughgarden \cite{Papadimitriou:2008:CCE:1379759.1379762} introduce the notion of `hypegraphical games' and provide computational results regarding correlated equilibria in such games. Chung and Tsiatas \cite{ChungTsiatas2012} study the limit outcome of various types of best-reply dynamics where at each stage some hyperedge is chosen and its members change their color probabilistically. Simon and Wojtczak \cite{ijcai2017-57} prove the existence of a strong equilibrium in a class of hypergraph clustering games they refer to as synchronization games. They go on and study the complexity of computing Nash and strong equilibria. Synchronization games are quite similar to one of our versions of mis-coordination games with some exceptions. For example, in such games each vertex may have a different color palette at his disposal.

Much more work has been done in the context of (simple) graphs (see \cite{CKPS10,CGJ08,KSS15,MBQ16,PS12} for a sample of related papers). The class of games over graphs (or networks) was first suggested as a model of wide interest by Kearns et al \cite{KearnsEtAl2001}.
Hoefer \cite{Hoefer2007} focuses on games of coordination, where the utility function, at an edge, takes on a value of 1 when the corresponding colors are the same and 0 otherwise and recently \cite{Apt2017} studies the PoA and the strong PoA for this class of games.%
\footnote{The {\em Strong PoA} is the ratio between the social welfare obtained in the optimal outcome and that obtained in the worst case {\em strong} equilibrium.}
In Anti-coordination Games (often refered to as Max $k$-Cut games) agents get a utility for each edge where colors differ. The PoA and the Strong PoA in such games has been studied in \cite{KunPR13, Gourves2009, Gourves2010}.
Feldman and Friedler  \cite{Feldman_Friedler_2015} devise a unified framework for games of coordination and anti-coordination, a class of games they refer to as `clustering games', and provide stronger as well as new results for the PoA and the strong PoA of such games.%
\footnote{The notion of a `hypergraphical clustering game', introduced above, naturally extends the notion of a `symmetric clustering game' introduced in \cite{Feldman_Friedler_2015}.}

Optimal (centralized) coloring schemes have been studied in the combinatorics and computer science communities. One prominent example is the frequency allocation problem. For a detailed discussion of the problem and related results we refer the reader to a survey written by the second author \cite{CF-survey}.

\subsection{Organization of the paper}
The formal model and the two utility functions we discuss are introduced in Section~\ref{sec:model}. The two main theorems are stated in Section \ref{subsec-utilities} while Sections  \ref{sec:non-monochromatic} and \ref{sec:conflict-free} consist of the corresponding proofs. Section~\ref{sec:discussion} discusses the results as well as natural directions for future research.

\section{Model}
\label{sec:model}

A {\em hypergraph} is a pair
$(V,\E$) where $V$ is a set and $\E$ is a collection of non-empty subsets
of $V$. The elements of $V$ are called {\em vertices} (alternatively, players or agents) and the elements of $\E$ are called {\em hyperedges} (alternatively, coalitions). When all hyperedges
in $\E$ contain exactly two elements of $V$ then the pair
$(V,\E)$ is a {\em simple graph}.
$H=(V,\E)$ is called {\em $r$-uniform} if $|e|= r$ for all $e \in \E$. So, in this terminology, a $2$-uniform hypergraph is a simple graph. We say that $H=(V,\E)$ is {\em $r$-minimal} if $|e|\ge r$ for all $e \in \E$. We denote by $\mathbb{H}_r$ the set of all $r$-uniform hypergraphs
and by $\mathbb{H}_{\geq r}$ the set of all $r$-minimal hypergraphs.

A {\em $k$-coloring} (alternatively, a strategy tuple) of $H$ is a function $c: V \rightarrow [k]$. We denote by $\mathcal{C}(k)$ the set of all $k$-colorings. For a given $k$-coloring $c$ and a hyperedge $e \in \E$, let $c(e) \subset [k]$  be the image of $e$, i.e., the set of colors associated with the vertices in $e$.

Each vertex (player) is endowed with a utility function,
$u_v: {\mathcal{C}(k)} \rightarrow \mathbb{R}$.
The social welfare of a $k$-coloring $c$, denoted $SW(c) = \sum_{v \in V}u_v(c)$, is the sum of the agents' utilities.

In this paper we focus on games of mis-coordination. In the case of simple graphs the notion of mis-coordination describes settings where agents would like to mis-coordinate with their neighbors. In particular each agent receives a utility of $1$ for each edge it participates in and where the color chosen by the neighbor is different than its own. Extending this concept to hypergraphs is ambiguous. Let us denote
by $\E(v) = \{e\in \E: v \in e\}$ the set of hyperedges containing $v$.
We propose two natural extensions:

\begin{enumerate}
\item
A vertex is called {\em non-monochromatic seeking} (an {\em NM-vertex} in short) if its utility function is given by
$$u_v(c)=|\{e\in \E(v): |c(e)|>1  \}|.$$
In words, a vertex enjoys each  hyperedge in which its color differs from at least one other vertex. Put differently, the utility function of $v$ is the number of non-monochromatic hyperedges
containing $v$.
\item
A vertex is called {\em conflict-free seeking} (a {\em CF-vertex} in short) if its utility function is given by
$$
u_v(c)=\cardin{ \{e \in \E(v) \mid |c(e)|=|c(e \setminus \{v\})| + 1 \} }.$$
In words, $v$ enjoys each hyperedge for which its color differs from the colors of all other vertices in that hyperedge.
\end{enumerate}
Note that for simple graphs (i.e., $r=2$) the two utility functions coincide.\\

A coloring $c$ is called a {\em (pure) Nash equilibrium} if no player can increase her utility by a unilateral change.
Formally, $u_v(c) \ge u_v(c_{-v},i)$ for all $v \in V$ and $i \in [k]$ (where $(c_{-v},i)$ denotes the coloring $c$ with agent $v$ substituting her color to $i$).

For a hypergraph $H$ and an integer $k \geq 2$, put $O(H,k)=\max \{SW(c) \mid c \in \mathcal{C}(k)\}$.
Put $NE(H,k) = \min \{SW(c) \mid c \in \mathcal{C}(k)\text{ is a Nash Equilibrium of } H  \}$. When the number of colors $k$ is known and is clear from the context, we sometimes abuse the notation and write $O(H)$ and $NE(H)$ instead.

For a given integer $k \geq 2$  and a given family of hypergraphs ${\cal H}$ we define the {\em Price of Anarchy} as
$PoA=PoA({\cal H},k) = \sup_{(H\in {\cal H})}\frac{O(H,k)}{NE(H,k)}$.

\section{Results}
\label{subsec-utilities}

Each of the two utility functions induces a game. We will show that each such game is in fact a potential game and hence admits a pure Nash equilibrium. For brevity we remind the reader of the notion of a potential game:

\noindent {\bf Potential function:}
Let $H=(V,\E)$ be a hypergraph and let $u_v(c)$ be a utility function.
A {\em potential function} for $H$ is a function $\psi:\C(k) \rightarrow \Re$ such that for any two colorings $c,c' \in \C(k)$ if $c$ and $c'$ differ only on one vertex $v \in V$. Then $$
\psi(c)- \psi(c')= u_v(c)-u_v(c').
$$

The following lemma is well known (see, e.g., \cite{MS96}):
\begin{lem}
\label{potential-lem}
Let $H$ be a hypergraph and $u$ a utility function. If $H$ admits a potential function $\psi$ with respect to $u$ then there exists a pure Nash equilibrium for the corresponding coloring game.
\end{lem}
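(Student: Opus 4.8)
The plan is to use the standard ``potential maximizer'' argument. Since the vertex set $V$ is finite, the set of colorings $\C(k)$ is finite, and hence the potential function $\psi$ attains its maximum at some coloring $c^\star \in \C(k)$. I claim that this $c^\star$ is a pure Nash equilibrium of the corresponding coloring game.

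To verify the claim, suppose for contradiction that $c^\star$ is not a Nash equilibrium. Then there exist a vertex $v \in V$ and a color $i \in [k]$ with $u_v(c^\star_{-v},i) > u_v(c^\star)$. Put $c' = (c^\star_{-v},i)$, so that $c^\star$ and $c'$ differ only on the vertex $v$. By the defining property of the potential function, $\psi(c') - \psi(c^\star) = u_v(c') - u_v(c^\star) > 0$, i.e.\ $\psi(c') > \psi(c^\star)$, contradicting the maximality of $\psi$ at $c^\star$. Therefore no vertex has a profitable unilateral deviation at $c^\star$, which is exactly the statement that $c^\star$ is a pure Nash equilibrium.

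There is essentially no obstacle here; the only point requiring care is the (implicit) finiteness of $V$, which is what guarantees that $\psi$ attains a maximum. Equivalently, one could argue dynamically: starting from an arbitrary coloring, as long as the current coloring is not an equilibrium some vertex can strictly increase its own utility, and by the potential property this strictly increases $\psi$; since $\psi$ takes only finitely many values, such ``better-response dynamics'' must terminate, and it can terminate only at a pure Nash equilibrium.
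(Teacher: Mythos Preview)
Your argument is correct and is exactly the standard potential-maximizer proof. The paper itself does not supply a proof of this lemma; it simply records it as well known with a reference to \cite{MS96}, so there is nothing further to compare.
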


We provide upper and lower bounds on the price of anarchy, for each of the utility functions,  as a function of two parameters: The number of available colors $k$ and the hyperedge size, $r$.

\begin{thm}
 \label{poa-NM}
 Whenever players are non-monochromatic seeking:
 $$
 PoA{(\mathbb{H}_{\geq r},k)} = (1+\frac{1}{(k-1)r}) \ \
  \forall r\geq 3, k \geq 2.
 $$
 \end{thm}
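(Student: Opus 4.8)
The plan is to reduce everything to two quantities attached to a coloring. For a hypergraph $H=(V,\E)$ write $S=S(H)=\sum_{e\in\E}|e|$. Since $SW(c)=\sum_{e\in\E}|e|\cdot\mathbbm 1[\,|c(e)|>1\,]$, we have $SW(c)=S-A(c)$, where $A(c)$ denotes the total size $\sum\{|e|\st e\in\E,\ |c(e)|=1\}$ of the monochromatic hyperedges under $c$, and trivially $O(H,k)\le S$. Hence the upper bound $PoA\le 1+\frac1{(k-1)r}$ will follow once I show $((k-1)r+1)\,A(c)\le S$ for every Nash equilibrium $c$ (the denominator $S-A(c)=SW(c)$ is positive whenever $\E\neq\emptyset$, as a Nash equilibrium cannot leave all hyperedges monochromatic).

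To exploit the Nash condition I first read off the deviation utilities. For $v\in V$ and $i\in[k]$ let $m_i(v)$ be the number of $e\in\E(v)$ in which all vertices other than $v$ are colored $i$; a hyperedge $e\in\E(v)$ becomes monochromatic when $v$ switches to color $i$ exactly when $c(e\setminus\{v\})=\{i\}$, so $u_v(c_{-v},i)=|\E(v)|-m_i(v)$. Thus $c$ is a Nash equilibrium iff $c(v)$ minimizes $i\mapsto m_i(v)$ for every $v$, which yields $m_{c(v)}(v)\le\frac1k\sum_{i\in[k]}m_i(v)$. Now I double count over $v$. On one hand $\sum_v m_{c(v)}(v)=A(c)$, because $c(e\setminus\{v\})=\{c(v)\}$ precisely when $e$ is monochromatic. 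On the other hand $\sum_v\sum_i m_i(v)=\sum_{e\in\E}\#\{v\in e\st c(e\setminus\{v\})\text{ is a singleton}\}$, and here is the only place $r\ge 3$ enters: for each $e$ this inner count equals $|e|$ if $e$ is monochromatic, equals $1$ if $e$ uses exactly two colors one of which is carried by a single vertex, and equals $0$ in every other case. Hence $\sum_v\sum_i m_i(v)=A(c)+B(c)$, where $B(c)$ counts the ``almost monochromatic'' hyperedges of the second type. Summing the Nash inequality gives $k\,A(c)\le A(c)+B(c)$, i.e. $(k-1)A(c)\le B(c)$. Since every hyperedge has size at least $r$, the monochromatic ones contribute $A(c)$ to $S$ and the $B(c)$ almost-monochromatic ones contribute at least $rB(c)$, so $S\ge A(c)+rB(c)\ge A(c)+r(k-1)A(c)=((k-1)r+1)A(c)$, proving the upper bound.

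For the matching lower bound I construct, for each $r\ge 3$ and $k\ge 2$, an $r$-uniform hypergraph attaining the ratio exactly. Take one hyperedge $e_0=\{v_1,\dots,v_r\}$ and, for every $i\in[r]$ and every color $j\in\{2,\dots,k\}$, a hyperedge $f_{i,j}$ consisting of $v_i$ together with $r-1$ fresh vertices, all fresh vertices being distinct across the $f_{i,j}$. Color each $v_i$ with $1$ and color the fresh vertices of $f_{i,j}$ with $j$. Then $m_\ell(v_i)=1$ for every $\ell\in[k]$ (the value $1$ coming from $e_0$ if $\ell=1$ and from $f_{i,\ell}$ otherwise) and $m_\ell(w)=0$ for every fresh vertex $w$, so this is a Nash equilibrium; only $e_0$ is monochromatic, so $SW(c)=r\cdot r(k-1)=r^2(k-1)$. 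Recoloring $v_1$ to $2$ and one fresh vertex of $f_{1,2}$ to $1$ makes every hyperedge non-monochromatic, so $O(H,k)=S=r+r(k-1)r=r\bigl((k-1)r+1\bigr)$, and the ratio is exactly $\tfrac{(k-1)r+1}{(k-1)r}=1+\tfrac1{(k-1)r}$.

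The step I expect to require the most care is the double-counting identity $\sum_v\sum_i m_i(v)=A(c)+B(c)$ — specifically the case analysis showing that for $r\ge 3$ every non-monochromatic hyperedge contributes at most $1$ (this is exactly where the hypothesis $r\ge 3$, rather than $r=2$, is essential), together with the routine but fiddly verification that the lower-bound coloring is genuinely a Nash equilibrium and that its social optimum really equals $S$.
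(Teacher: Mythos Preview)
Your argument is correct. The upper bound is essentially the paper's argument in streamlined notation: your $A(c)$ and $B(c)$ are exactly the paper's $D_2$ and $D_1$, your Nash inequality $(k-1)A(c)\le B(c)$ is the paper's $D_1\ge (k-1)D_2$, and your inequality $S\ge A(c)+rB(c)$ packages together the paper's $D_3\ge (r-1)D_1$ with the trivial decomposition $D_1+D_2+D_3+D_4=S$. Your ``min $\le$ average'' reading of the Nash condition via $m_i(v)$ is a clean way to phrase what the paper does coordinatewise.

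Your lower-bound construction is genuinely different and considerably simpler than the paper's. The paper builds an auxiliary $r$-uniform hypergraph on $k$ color classes of size $r-1$, takes $r$ disjoint copies, and threads a transversal hyperedge through the copies of each vertex; this yields a vertex-transitive example in which every vertex sits in $r(k-1)$ non-monochromatic edges and one monochromatic one. Your single ``hub'' edge $e_0$ with $(k-1)$ pendant $r$-edges at each vertex achieves the same ratio with far fewer vertices and a one-line Nash verification. The paper's construction has the aesthetic advantage of being completely symmetric, but yours is more economical and makes the tightness mechanism (each $v_i$ is indifferent among all $k$ colors because each choice kills exactly one incident edge) transparent.
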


\begin{thm}
\label{poa-cf-new}
 Whenever players are conflict-free seeking:
 \mbox{}
\begin{enumerate}
\item
 $(\frac{k}{k-1})^{r-1} \leq PoA(\mathbb{H}_{r},k) \leq  \frac{2k+r-2}{2k-r} $  whenever $k \geq r$.%
 \footnote{Note that for a simple graph, namely when $r=2$, the utility function identifies with that studied in \cite{KunPR13}. In addition, the lower and upper bounds equal each other as well as to $\frac{k}{k-1}$, the bound obtained in \cite{KunPR13}.}
\item
 $ \frac{k-1}{r} \left (\frac{k}{k-1} \right )^{r-1} \leq PoA(\mathbb{H}_{r},k) \leq \frac{k-1}{r}\frac{2k+r-2}{2k-r} $
  whenever $\frac{r}{2} < k < r$.
\item
  $PoA(\mathbb{H}_{r},k) =\infty$ whenever  $k \leq\frac{r}{2}$.
\end{enumerate}
\end{thm}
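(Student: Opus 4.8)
The plan is to treat the three regimes in turn; the upper bounds all flow from one observation about equilibria, and the lower bounds from a single gadget. Throughout write the social welfare as $SW(c)=\sum_{e\in\E}g_c(e)$, where $g_c(e)$ counts the vertices of $e$ whose color is unique within $e$ under $c$ --- this is just the CF-utility re-summed edge by edge. A pure Nash equilibrium exists since the game is a potential game, so $NE(H,k)$ is well defined (the constructions below in any case exhibit equilibria explicitly). For part~(3) it suffices to produce, for each $k\le r/2$, one $H\in\mathbb H_r$ with $NE(H,k)=0<O(H,k)$: take a single hyperedge $e$ on $r\ge 2k$ vertices and color it so that every color of $[k]$ is used at least twice. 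Then $g_c(e)=0$, and a vertex moving to any other color joins a class that already has size at least $2$, so it stays non-unique and cannot profit; thus $c$ is an equilibrium with $SW(c)=0$. Coloring $r-1$ vertices alike and one vertex differently gives $SW>0$, so $O(H,k)>0$ and $PoA=\infty$.

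For the upper bounds (parts~(1) and~(2)) fix a Nash equilibrium $c$. The key step is to sum the equilibrium inequality $u_v(c)\ge u_v(c_{-v},i)$ over \emph{all} $i\in[k]$, not merely over the deviation to the optimum. Since $u_v(c_{-v},i)=\card{\{e\in\E(v):i\notin c(e\setminus v)\}}$, summing over $i$ and using $\card{c(e\setminus v)}=\card{c(e)}-\mathbbm{1}[v\text{ is unique in }e]$ gives $\sum_{i\in[k]}u_v(c_{-v},i)=\sum_{e\in\E(v)}\bigl(k-\card{c(e)}\bigr)+u_v(c)$, hence $(k-1)u_v(c)\ge\sum_{e\in\E(v)}\bigl(k-\card{c(e)}\bigr)$. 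Summing over $v$ (each $e$ appears $r$ times) yields $(k-1)SW(c)\ge r\bigl(k\card{\E}-\sum_{e}\card{c(e)}\bigr)$. Now the elementary count $2\card{c(e)}-g_c(e)\le r$ --- each color class of $e$ is either a singleton, counted once in $g_c(e)$, or has size at least $2$ --- gives $\sum_e\card{c(e)}\le\tfrac12\bigl(r\card{\E}+SW(c)\bigr)$; substituting and solving for $SW(c)$ gives $SW(c)\ge\frac{r(2k-r)}{2k+r-2}\card{\E}$. Finally $O(H,k)\le r\card{\E}$ in general, while if $k<r$ no hyperedge can contain $k$ uniquely colored vertices (the remaining $r-k\ge 1$ vertices force an additional, non-singleton color class), so $O(H,k)\le(k-1)\card{\E}$. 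Dividing the two bounds gives $PoA\le\frac{2k+r-2}{2k-r}$ when $k\ge r$ and $PoA\le\frac{k-1}{r}\cdot\frac{2k+r-2}{2k-r}$ when $r/2<k<r$.

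For the lower bounds let $H$ be the complete $r$-partite $r$-uniform hypergraph with parts $V_1,\dots,V_r$, each of size $k$, written $V_i=\{(i,1),\dots,(i,k)\}$, whose hyperedges are all transversals, so $\card{\E}=k^r$. Take the ``diagonal'' coloring $c(i,j)=j$. For a vertex $(i,j)$ the number of transversals in which it is uniquely colored is $\prod_{i'\ne i}\card{\{v\in V_{i'}:c(v)\ne j\}}=(k-1)^{r-1}$, independent of $j$; hence recoloring any single vertex leaves its utility unchanged and $c$ is a (weak) equilibrium, with $SW(c)=rk(k-1)^{r-1}$. When $k\ge r$, color all of $V_i$ with color $i$: every transversal is rainbow, so $O(H,k)=rk^r$ and $PoA\ge\bigl(\tfrac{k}{k-1}\bigr)^{r-1}$. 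When $r/2<k<r$, color $V_1,\dots,V_{k-1}$ with colors $1,\dots,k-1$ and all of $V_k,\dots,V_r$ with color $k$: every transversal then has exactly $k-1$ uniquely colored vertices, so $O(H,k)=(k-1)k^r$ and $PoA\ge\frac{k-1}{r}\bigl(\tfrac{k}{k-1}\bigr)^{r-1}$.

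I expect the main obstacle to be the upper-bound argument of the second paragraph. The naive local bound --- averaging the equilibrium inequalities and estimating $\card{c(e\setminus v)}$ by the trivial $r-1$ --- yields only $PoA\le\frac{k}{k-r+1}$, which is strictly weaker than claimed for every $k\ge r$ with $r\ge 3$ and is vacuous once $k<r$; it is the refinement $\card{c(e)}\le\tfrac12\bigl(r+g_c(e)\bigr)$ --- the mechanism by which a low-welfare equilibrium is forced to have concentrated colors --- together with the averaging over all $k$ deviations that recovers the exact constant, and getting these to collapse into a clean closed form is the delicate point. The infinite-ratio gadget, the equilibrium check for the diagonal coloring (which reduces to the invariance of the ``odd one out'' count under recoloring one coordinate), and the two optimal colorings are routine once the right hypergraph has been identified.
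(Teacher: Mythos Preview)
Your proof is correct and follows essentially the same approach as the paper's: both sum the Nash inequalities over all colors and vertices, combine with the elementary bound $|c(e)|\le\tfrac12(r+g_c(e))$ (the paper writes this as $|c(e)|\le r-j(e)/2$ with $j(e)=r-g_c(e)$), and solve for $SW(c)$; both use the same complete $r$-partite gadget with the diagonal coloring for the lower bounds and the single-edge example for part~(3). Your write-up of the upper bound is somewhat more streamlined---the paper routes the identical computation through auxiliary indicators $L,G,M$ and the aggregate $\hat j=\sum_e j(e)$---but the underlying identities and inequalities coincide.
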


The following two sections contain the proofs for both theorems.

\section{Non-monochromatic seeking agents}
\label{sec:non-monochromatic}

Throughout this section we assume vertices are  NM-vertices, that is their utility is given by $u_v(c)=|\{e\in \E(v): |c(e)|>1  \}|.$

\begin{claim}
\label{thm_mono is potential}
For an integer $k$, a hypergraph $H=(V,\E)$ and a coloring $c \in \C(k)$, let $\psi(c)$ be the number of non-monochromatic hyperedges in $\E$. Then $\psi$ is a potential function for the corresponding hypergraphical clustering game.
\end{claim}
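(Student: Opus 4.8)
The plan is to show directly that $\psi(c)$, the number of non-monochromatic hyperedges, satisfies the defining identity of a potential function. So fix a vertex $v$, a color $i \in [k]$, and a coloring $c$; let $c' = (c_{-v}, i)$, so $c$ and $c'$ agree everywhere except possibly at $v$. I must verify that $\psi(c) - \psi(c') = u_v(c) - u_v(c')$.

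The key observation is that changing the color of $v$ only affects the monochromatic/non-monochromatic status of hyperedges that contain $v$: for any hyperedge $e \notin \E(v)$, the restriction of $c$ and $c'$ to $e$ is identical, so $e$ contributes equally to $\psi(c)$ and $\psi(c')$ and cancels. Hence $\psi(c) - \psi(c') = |\{e \in \E(v) : |c(e)| > 1\}| - |\{e \in \E(v) : |c'(e)| > 1\}|$, and the right-hand side is precisely $u_v(c) - u_v(c')$ by the definition of the NM-utility. That is the whole argument; there is no real obstacle here, just the bookkeeping of splitting $\E$ into $\E(v)$ and its complement.

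One small point worth spelling out to make the cancellation rigorous: write $\psi(c) = \sum_{e \in \E} \mathbbm{1}[\,|c(e)| > 1\,]$, split the sum as $\sum_{e \in \E(v)} + \sum_{e \notin \E(v)}$, and note that in the second sum each summand is unchanged when $c$ is replaced by $c'$ (since $v \notin e$ implies $c(e) = c'(e)$). Subtracting the two expressions for $\psi(c)$ and $\psi(c')$ kills the second sum entirely and leaves exactly $u_v(c) - u_v(c')$. Combining this Claim with Lemma~\ref{potential-lem} then yields the existence of a pure Nash equilibrium for the NM-game, which is the use we will make of it in Section~\ref{sec:non-monochromatic}.
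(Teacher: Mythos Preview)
Your proof is correct and is exactly the natural argument; the paper in fact omits the proof entirely, calling it ``straightforward,'' so your write-up fills in precisely what was left to the reader.
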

The proof of Claim~\ref{thm_mono is potential} is straightforward and hence omitted.
Note that Lemma~\ref{potential-lem} combined with Claim~\ref{thm_mono is potential} implies that the game played among NM-vertices admits a pure Nash equilibrium and, furthermore, that the best Nash equilibrium attains the social optimum. However, other Nash equilibria may entail low social welfare.

Before we turn to the proof of Theorem \ref{poa-NM} we require some notations and a lemma.

For an $r$-minimal hypergraph $H=(V,\E)$, a $k$-coloring $c$ of $H$ and a vertex $v \in V$ we define the following four parameters:
\begin{enumerate}
\item
For any $i\not = c(v)$ let $d_1^i(v) = |\{e \in \E(v) \mid \cardin{c(e)}=2, c(e\setminus \{v\})=\{i\} \  \}|$ be the number of hyperedges $e \in \E$ incident to $v$ for which the color of all other vertices in $e$ is $i$ which is different from $c(v)$. Let $d_1(v) = \sum_{i\not = c(v)}d_1^i(v)$.
\item
$d_2(v) = |\{e \in \E(v) \mid \cardin{c(e)}=1 \}| $ is the number of monochromatic hyperedges containing $v$.
\item
$d_3(v) = |\{e \in \E(v) \mid \cardin{c(e)}=2, \exists v'\not = v \mid c(e\setminus \{v'\})|=1 \}|$ is the number of hyperedges incident to $v$ with exactly two colors, of which one vertex distinct from $v$ has a unique color.
\item
$d_4(v) = \cardin{\{e \in \E(v)\}} - \left( d_1(v)+d_2(v)+d_3(v) \right)$  is the number of hyperedges containing $v$ that do not fall into any of the first three categories.
\end{enumerate}

We denote by  $D_i = D_i(c) = \sum_{v \in V} d_i(v)$ the corresponding sums.

Note that for $r=2$, the set of hyperedges in $\E(v)$ counted in $d_1(v)$ is identical to the set counted in $d_3(v)$ whereas for $r \geq 3$ those two sets are disjoint.

We need the following lemma:
\begin{lem}\label{auxiliary}
Let $r \geq 3$ and let $H$ be an arbitrary hypergraph in $\mathbb{H}_{\geq r}$. For any coloring $c$ we have:
\begin{enumerate}
\item
$D_1+D_2+D_3+D_4 = \sum_{v\in V}|\E(v)|$.
\item
$D_3 \ge (r-1)D_1$.
\item
$D_1 \geq (k-1)D_2$ whenever $c$ is a Nash equilibrium.
\end{enumerate}
\end{lem}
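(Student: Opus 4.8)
The plan is to verify the three items separately, using only the combinatorial definitions of $d_1,\dots,d_4$ and the Nash condition where needed. Item (1) is immediate: for each fixed vertex $v$, the four sets of hyperedges counted by $d_1(v),d_2(v),d_3(v),d_4(v)$ partition $\E(v)$ by construction (the definition of $d_4(v)$ is exactly the count of the leftover hyperedges), so $d_1(v)+d_2(v)+d_3(v)+d_4(v)=|\E(v)|$; summing over $v\in V$ gives the claim. The only subtle point worth a sentence is the remark already made in the text: when $r\ge 3$ the sets counted by $d_1(v)$ and $d_3(v)$ are genuinely disjoint (a hyperedge with $c(e\setminus\{v\})=\{i\}$ and $|e|\ge 3$ cannot have another vertex $v'\ne v$ with a singleton color image on $e\setminus\{v'\}$), so the partition is clean.

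For item (2), the plan is a double-counting / injection argument on hyperedges. Consider a hyperedge $e$ that is counted in $d_1(v)$ for some vertex $v\in e$; this means $|c(e)|=2$ and $v$ is the unique vertex of $e$ carrying one of the two colors, while all $|e|-1\ge r-1$ other vertices share the other color $i$. I want to charge this hyperedge to the quantity $D_3$. Observe that from the point of view of any of the $\ge r-1$ vertices $v'\in e\setminus\{v\}$, the hyperedge $e$ has exactly two colors and the vertex $v$ (which is $\ne v'$) has a color appearing only at $v$ on $e$, i.e. $|c(e\setminus\{v\})|=1$; hence $e$ is counted in $d_3(v')$ for every such $v'$. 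Thus each contribution of a hyperedge to $D_1$ (one per ``singleton vertex'' $v$, and there is exactly one such $v$ when $|c(e)|=2$ in the $d_1$-pattern) generates at least $r-1$ contributions to $D_3$. Summing over all such hyperedges yields $D_3\ge (r-1)D_1$. I should be a little careful to confirm a hyperedge is not double-charged in $D_1$ from two different vertices; but if $e$ is counted in $d_1(v)$ then $v$ is forced to be the unique vertex of the minority color, so $v$ is unique, and the charging is well defined.

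For item (3), the plan is to use the Nash (best-response) condition vertex by vertex. Fix a Nash equilibrium $c$ and a vertex $v$. The hyperedges in $\E(v)$ that are currently monochromatic contribute $0$ to $u_v(c)$ and there are $d_2(v)$ of them. If $v$ deviates to some color $j\ne c(v)$, then: every monochromatic hyperedge $e\in\E(v)$ with common color $c(v)$ becomes non-monochromatic (gain of $1$ each, so $+d_2(v)$ total, since the common color of a monochromatic hyperedge containing $v$ is necessarily $c(v)$); and the only hyperedges that can be \emph{lost} are those counted by $d_1^j(v)$ — precisely the hyperedges where all other vertices have color exactly $j$, which were non-monochromatic under $c$ but become monochromatic after $v$ switches to $j$. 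No other hyperedge incident to $v$ can change its monochromatic/non-monochromatic status. Hence $u_v(c_{-v},j) - u_v(c) \ge d_2(v) - d_1^j(v)$, and the Nash inequality forces $d_1^j(v)\ge d_2(v)$ for every $j\ne c(v)$. Summing over the $k-1$ choices of $j$ gives $d_1(v)=\sum_{j\ne c(v)} d_1^j(v) \ge (k-1)d_2(v)$, and summing over $v\in V$ gives $D_1\ge (k-1)D_2$.

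The main obstacle is bookkeeping rather than depth: in item (3) one must argue carefully that switching $v$'s color affects \emph{only} the two classes of hyperedges described (the monochromatic ones in color $c(v)$, which are all of the $d_2(v)$ monochromatic hyperedges at $v$, and the $d_1^j(v)$ ones), and in item (2) one must make sure the charging map from $D_1$-incidences to $D_3$-incidences is genuinely injective after accounting for which vertex of $e$ is the ``special'' one. Both are routine once the cases are laid out, and $r\ge 3$ is used exactly to guarantee $|e\setminus\{v\}|\ge r-1$ in item (2) and the $d_1$–$d_3$ disjointness in item (1).
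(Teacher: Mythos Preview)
Your proof is correct and follows essentially the same approach as the paper's own proof: item~(1) by the trivial partition of $\E(v)$, item~(2) by the double-counting observation that a hyperedge $e$ with $|c(e)|=2$ and a unique minority vertex $v$ contributes once to $D_1$ (at $v$) and at least $r-1$ times to $D_3$ (at the other vertices of $e$), and item~(3) by comparing $u_v(c)$ with $u_v(c_{-v},j)$, noting the net change is $d_2(v)-d_1^j(v)$, applying the Nash inequality, and summing over $j\neq c(v)$ and then over $v$. Your write-up is in fact slightly more careful than the paper's in spelling out why the charging in item~(2) is well defined and why in item~(3) no hyperedges outside the $d_2$- and $d_1^j$-classes change status.
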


\begin{proof}
(1) is straightforward. (2) Note that for every vertex $v$ $d_1(v)+d_3(v)$ counts the total number of hyperedges in $e \in \E(v)$ with $\cardin{c(e)}=2$ so that there is vertex $v \in e$ whose color is distinct from all other vertices in $e$. Note also that any hyperedge $e \in \E$ with this property is counted exactly once in some $d_1(v)$ and at least $r-1$ times in $d_3(u)$ for the other vertices $u \in e$. So $D_3 \geq (r-1)D_1$.   As for (3) let $c$ form a Nash equilibrium. Note that the utility of $v$ is $u_v(c) = d_1(v)+d_3(v)+d_4(v)$. Following a deviation of $v$ from $c(v)$ to some other color $i \not = c(v)$ will increase the utility by $d_2(v)$ (the corresponding hyperedges that are monochromatic will cease to be so) but will simultaneously decrease the utility by $d_1^i(v)$. As $c$ is a Nash equilibrium the net increase cannot be positive and so $d_1^i(v) \geq d_2(v)$. Summing over $i \ne c(v)$ gives $d_1(v) \geq (k-1)d_2(v)$ for any $v$. The asserted inequality follows by summing over all $v$.
\end{proof}

We now turn to the proof of Theorem~\ref{poa-NM}:

\begin{proof}[Proof of Theorem~\ref{poa-NM}]

The proof has two parts. First we show that $PoA \le 1+\frac{1}{(k-1)r}$ and then we show that $PoA \ge 1+\frac{1}{(k-1)r}$ by providing an explicit construction of an $r$-uniform hypergraph $H$ and a Nash equilibrium coloring $c$ for which $\frac{O(H)}{SW(c)} = 1+\frac{1}{(k-1)r}$.

{\bf \noindent Upper Bound:}

Let $H$ be an arbitrary hypergraph in $\mathbb{H}_{\geq r}$ and let $c$ be a Nash equilibrium coloring that minimizes the social welfare $SW(c)$ over all Nash equilibria so $NE(H)=SW(c)$. Let $D_i =D_i(c)$. $O(H)$ clearly satisfies the inequality $O(H) \leq \sum_{v\in V}|\E(v)|$ which implies that $O(H) \leq D_1+D_2+D_3+D_4$ by part (i) of Lemma~\ref{auxiliary}. By part (iii) $D_1+D_2+D_3+D_4 \leq D_1+\frac{1}{k-1}D_1+ D_3+D_4$. On the other hand, as noted in the proof of Lemma~\ref{auxiliary},
 $NE(H) = SW(c) = D_1+D_3+D_4$.
Hence, $\frac{O(H)}{NE(H)} \leq \frac{(1+\frac{1}{k-1})D_1+D_3+D_4}{D_1+D_3+D_4}$. As the enumerator is larger than the denominator this expression is monotonically decreasing in $D_3$ and $D_4$. As $D_4\ge 0$ and $D_3 \ge (r-1)D_1$ (part (ii) of Lemma~\ref{auxiliary}) we conclude that $\frac{O(H)}{NE(H)} \leq  \frac{(1+\frac{1}{k-1})D_1+(r-1)D_1}{D_1+(r-1)D_1} = 1+\frac{1}{(k-1)r}$ as asserted.

{\bf \noindent Lower Bound:}
The following construction proves that $PoA \geq (1+\frac{1}{(k-1)r})$:


Let $A_1,\ldots,A_k$ be $k$ pairwise disjoint sets each containing exactly $r-1$ elements.
Put $S= \bigcup_{i=1}^k A_i$. We first construct an auxiliary $r$-uniform hypergraph $G= (S,\E')$ as follows:
$\E'$ consists of all $r$-tuples of the form $\{A_i \cup \{x\} \mid x \in S\setminus A_i, i\in [k]\}$.
So there are a total of $k(r-1)$ vertices and $k(k-1)(r-1)$ hyperedges and each vertex belongs to exactly $r(k-1)$ hyperedges.
We now construct the $r$-uniform hypergraph $H=(V,\E)$ by taking $r$ disjoint copies of $G$ denoted $G_i=(S_i,\E'_i)$ and adding for every original vertex of $v \in S$ a hyperedge consisting of all $r$ copies of $v$. Formally,
$V = \bigcup_{j=1}^r S_j$  so $\cardin{V} = r(r-1)k$.  Put $\E_1 = \bigcup_{j=1}^r\E'_j$. For a vertex $v \in S$, let
$\{v_1,\ldots,v_r\}$ be the set of its copies in $V$. Put $\E_2 = \{ \{v_1,\ldots,v_r\} \mid v \in S\}$. Finally put $\E= \E_1 \cup \E_2$. See Figure~\ref{fig:LB-mono} for an illustration.

\begin{figure}[htb]
    \begin{center}
        \includegraphics[width=10cm,height=5cm]{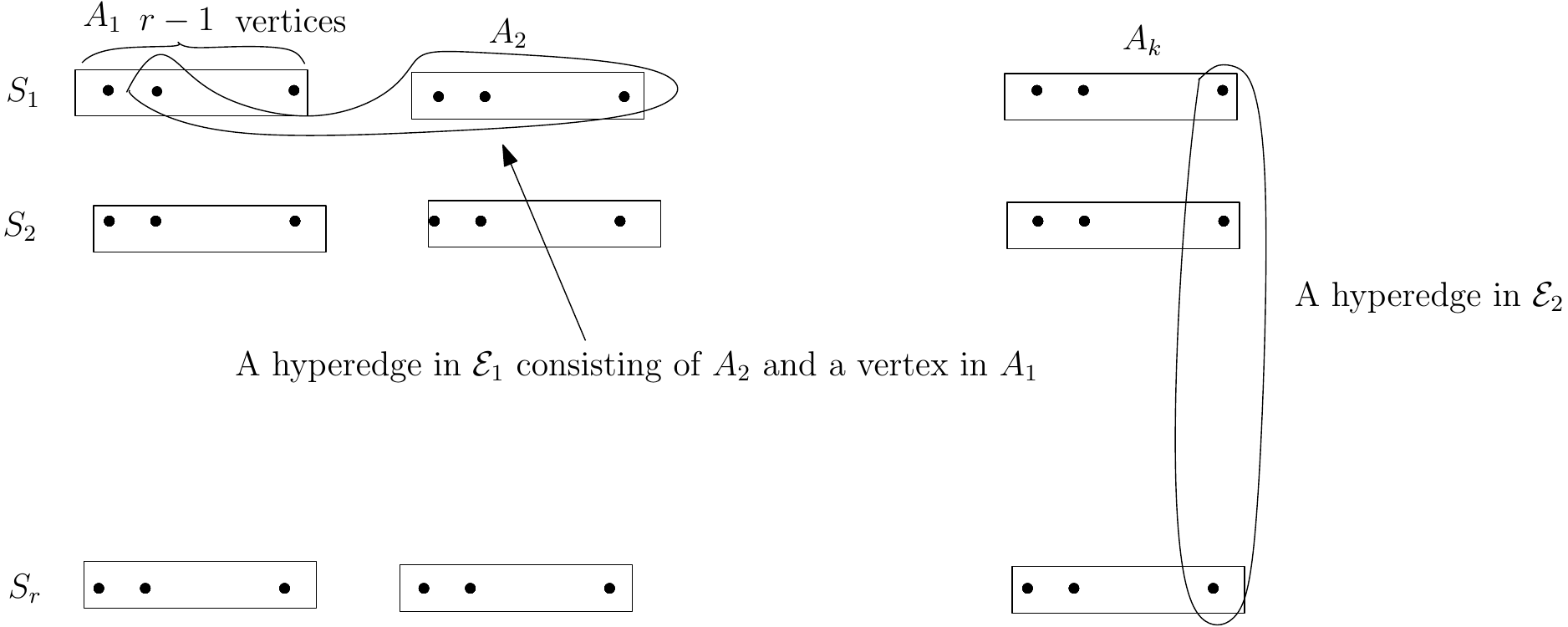}
        \caption{Illustration of the lower bound construction in Theorem~\ref{poa-NM}.}

        \label{fig:LB-mono}
    \end{center}
\end{figure}

Note that each vertex in $V$ belongs to exactly one hyperedge of $\E_2$ and $r(k-1)$ hyperedges of $\E_1$.

Consider the following $k$-coloring $c$.  A vertex $v \in V$ is colored $i$ if and only if it is a copy of vertex in $G$ that belongs to $A_i$. For this coloring the hyperedges in $\E_1$ are all nonmonochromatic. However, those in $\E_2$ are all monochromatic.  Therefore $u_v(c) =  r(k-1)$ and $SW(c) = |V|r(k-1)$. Note that any unilateral change in the color of $v$ adds a nonmonochromatic hypergraph to $\E_2$ but reduces the number of  nonmonochromatic hyperedges in $\E_1$ by 1 as well and hence is not profitable. Therefore $c$ is a Nash equilibrium coloring.

On the other hand we can properly color this hypergraph by taking the coloring $c$ and make a cyclic shift in the colors for $S_1$ where all vertices of $A_i$ are colored with $ i+ 1 \mod k$. Denote the resulting coloring ${\bar c}$.
As this is a proper coloring,  $u_{\bar c}(v) = r(k-1)+1$  for each $v \in V$. This, in turn, implies that
$SW({\bar c}) = |V|(r(k-1)+1)$.

Note that for this hypergraph
$$\frac{O(H)}{NE(H)} =  \frac{\max \{SW({\bar c}) \mid c \in \mathcal{C}(k)\}}{\min \{SW(c') \mid c'\in \mathcal{C}(k) \text{ is a Nash equilibrium } \}}  \geq \frac{r(k-1)+1}{r(k-1)}$$ which proves that
$ PoA(H,k) \ge 1+\frac{1}{(k-1)r}$.

\end{proof}

Theorem \ref{poa-NM} complements results obtained in \cite{KunPR13} who provide a bound of $\frac{k}{k-1}$ for graphs (namely for the case where $r=2$). Note that our bound does not coincide with theirs. The reason is that (as mentioned already) the hyperedges counted by the parameters $d_1(v)$ and $d_3(v)$ coincide over graphs (where all hyperedges are of size $2$) whereas they are pairwise disjoint whenever $r\ge 3$. Once we notice this our technique reaffirms the bound obtained in \cite{KunPR13}.

\section{Conflict-Free seeking agents}
\label{sec:conflict-free}

Throughout this section we assume vertices are CF-vertices, that is, their utility is given by
$u_v(c)=\cardin{ \{e \in \E(v) \mid |c(e)|=|c(e \setminus \{v\})| + 1 \} }$.

For any coloring $c$ and a hyperedge $e\in \E$, put $\phi(c)= \sum_{e\in \E} |c(e)|$.

\begin{claim}\label{claim_phi}
$\phi(c)$ is a potential function for the hypergraphical clustering game played by CF-vertices.
\end{claim}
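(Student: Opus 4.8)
The plan is to verify directly that $\phi(c) = \sum_{e \in \E} |c(e)|$ satisfies the defining property of a potential function: if $c$ and $c'$ differ only in the color of a single vertex $v$, then $\phi(c) - \phi(c') = u_v(c) - u_v(c')$. Since only $v$ changes color, for every hyperedge $e$ not containing $v$ we have $c(e) = c'(e)$, so such hyperedges contribute $0$ to $\phi(c) - \phi(c')$. Hence the difference localizes to hyperedges in $\E(v)$, and it suffices to show that for each $e \in \E(v)$,
\[
|c(e)| - |c'(e)| = \big[\text{$e$ is CF-good for $v$ under $c$}\big] - \big[\text{$e$ is CF-good for $v$ under $c'$}\big],
\]
where ``CF-good for $v$'' means $|c(e)| = |c(e \setminus \{v\})| + 1$, i.e. $v$'s color is not shared by any other vertex of $e$; summing this identity over $e \in \E(v)$ then gives exactly $u_v(c) - u_v(c')$.

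The key observation driving the per-hyperedge identity is that $c(e \setminus \{v\}) = c'(e \setminus \{v\})$, since the two colorings agree off $v$; write $B = c(e \setminus \{v\})$ for this common set of ``other'' colors in $e$. Now $|c(e)| = |B|$ if $c(v) \in B$ and $|c(e)| = |B| + 1$ if $c(v) \notin B$; the same holds for $c'$ with $c'(v)$. Therefore $|c(e)| = |B| + \mathbbm{1}[c(v) \notin B]$, and the indicator $\mathbbm{1}[c(v) \notin B]$ is precisely the indicator that $e$ is CF-good for $v$ under $c$. Subtracting the analogous expression for $c'$, the $|B|$ terms cancel and we are left with exactly $\mathbbm{1}[c(v) \notin B] - \mathbbm{1}[c'(v) \notin B]$, which is the desired right-hand side for the hyperedge $e$.

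Summing over all $e \in \E(v)$ yields $\phi(c) - \phi(c') = \sum_{e \in \E(v)} \big(\mathbbm{1}[c(v) \notin c(e\setminus\{v\})] - \mathbbm{1}[c'(v) \notin c'(e\setminus\{v\})]\big) = u_v(c) - u_v(c')$, which is the claim. There is essentially no obstacle here: the only thing to be careful about is the bookkeeping that hyperedges outside $\E(v)$ genuinely cancel (immediate, since $c$ and $c'$ restrict identically to them) and that $c(e\setminus\{v\})$ does not change when only $v$ is recolored (also immediate). So the argument is a short direct computation, and the ``hard part'' is merely stating it cleanly.
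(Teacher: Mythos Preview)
Your proof is correct and follows essentially the same approach as the paper: localize the difference $\phi(c)-\phi(c')$ to hyperedges $e\in\E(v)$, write $|c(e)|=|c(e\setminus\{v\})|+\mathbbm{1}[c(v)\notin c(e\setminus\{v\})]$, observe that $c(e\setminus\{v\})=c'(e\setminus\{v\})$, and subtract. The paper's argument is identical in substance, just slightly more compressed.
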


\begin{proof}
Consider two colorings $c$ and $c'$ that differ only on the vertex $v$. Assume $c(v)=i$ and $c'(v)=j \not =i$. $u_v(c) - u_v(c') = \sum_{e \in \E(v)} \mathbbm{1}(i \not \in c(e\setminus \{v\})) -  \mathbbm{1}(j \not \in c(e\setminus \{v\}))$, where $\mathbbm{1}$ denotes the indicator function.
Note that $|c(e)| = \cardin{c(e\setminus\{v\})} + \mathbbm{1}(i \not \in c(e\setminus \{v\}))$ and similarly for $c'$ and $j$.  Hence, since $c(e\setminus{v}) = c'(e\setminus{v})$, for any hyperedge
$\mathbbm{1}(i \not \in c(e\setminus \{v\})) -  \mathbbm{1}(j \not \in c(e\setminus \{v\})) =
|c(e)|-|c'(e)|$ and the conclusion follows.
\end{proof}

Lemma~\ref{potential-lem} combined with Claim~\ref{claim_phi} ensures the existence of a pure Nash equilibrium coloring.

Turning to the question of the price of anarchy for CF-vertices, we now prove Theorem~\ref{poa-cf-new} which provides bounds for the family $\mathbb{H}_{r}$, of $r$-uniform hypergraphs. The proof of Theorem~\ref{poa-cf-new} will make use of the following notations:
Fix a hypergraph $H=(V,\E) \in \mathbb{H}_{r}$ and a corresponding coloring, $c$.
 For every triplet $(e,v,i) \in \E \times V \times [k]$ we define the following indicators:
\begin{itemize}
\item
$L(e,v,i) =1$ if and only if $v\in e$, $|c(e\setminus\{v\})| =|c(e)|-1$ and $i\in c(e\setminus\{v\})$.
Otherwise $L(e,v,i) =0$.
In words, $L(e,v,i)=1$ indicates that $v$ gets a utility of one from $e$ and will lose it upon deviation to the color $i$.
\item
$G(e,v,i) =1$ if and only if $v\in e$, $c(e\setminus\{v\}) =c(e)$ and $i\not\in c(e\setminus\{v\})$.
Otherwise $G(e,v,i) =0$.
In words, $G(e,v,i)=1$ indicates that $v$ gets no utility from $e$ but will gain one upon deviation to the color $i$.
\item
$M(e,v,i) =1$ if and only if $v\in e$, $|c(e\setminus\{v\})| =|c(e)|-1$ and $i\not\in c(e)$.
Otherwise $M(e,v,i) =0$.
In words, $M(e,v,i)=1$ indicates that $v$ gets a utility of one from $e$ and will maintain it upon deviation to the color $i$.
\end{itemize}

In addition, let $j(e)= |\{ v\in e: |c(e\setminus\{v\})| =c(e)  \}|$ denote the number of vertices in $e$ whose color is not unique in $e$. Hence $r-j(e)$ counts the number of vertices that are unique. Note that $|c(e)| \le r-j(e)+\lfloor\frac{j(e)}{2}\rfloor \leq r-j(e) + \frac{j(e)}{2}= r-\frac{j(e)}{2}$. Put $\hat j =  \sum_{e\in \E}j(e)$ and conclude that
\begin{equation}\label{bound on the sum of C(e)}
\sum_{e\in \E} |c(e)| \leq \sum_{e\in \E} r-\frac{j(e)}{2} = r|\E|-\frac{\hat j}{2}.
\end{equation}

We also need the following lemma which provides a bound on ${\hat j}$ when $c$ is a Nash equilibrium:

\begin{lem}
\label{j-hat-bound}
Consider an arbitrary $H=(V,\E) \in \mathbb{H}_{r}$ and an arbitrary Nash equilibrium coloring $c$.  Then
\begin{enumerate}
\item
$\hat j \leq \frac{\cardin{\E}r(r-1)}{\frac{r}{2}+k-1}.$
\item
$SW(c) \geq \cardin{\E}r\frac{2k-r}{2k+r-2}.$
\end{enumerate}
\end{lem}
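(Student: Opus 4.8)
The plan is to prove part (1) by a double-counting argument over triples $(e,v,i)$ using the Nash condition, and then derive part (2) from part (1) by a short chain of inequalities involving \eqref{bound on the sum of C(e)} and the relation between $SW(c)$ and $\phi(c)$. First I would unpack the utility of a CF-vertex $v$: it equals $\sum_{e\in\E(v)}\mathbbm{1}(|c(e\setminus\{v\})| = |c(e)|-1)$, i.e. the number of hyperedges in which $v$'s color is unique. When $v$ deviates from $c(v)$ to $i$, the hyperedges $e\in\E(v)$ on which $v$ loses a unit are exactly those counted by $L(e,v,i)$ (the old color $c(v)$ was unique but $i$ already appears among the others), while the hyperedges on which $v$ gains a unit are exactly those counted by $G(e,v,i)$ (the old color $c(v)$ was not unique, and $i$ is a brand-new color for $e$). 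Since $c$ is a Nash equilibrium, for every $v$ and every $i\neq c(v)$ we get
\[
\sum_{e\in\E} G(e,v,i) \;\le\; \sum_{e\in\E} L(e,v,i).
\]
Summing this over all $v\in V$ and all $i\in[k]$ (the terms with $i=c(v)$ vanish on both sides, so we may as well sum over all $i$) yields $\sum_{e,v,i} G(e,v,i) \le \sum_{e,v,i} L(e,v,i)$.

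Next I would evaluate each side hyperedge by hyperedge. Fix $e\in\E$ and recall $j(e)$ is the number of vertices in $e$ whose color is repeated in $e$, so $|c(e\setminus\{v\})| = |c(e)|$ for exactly $j(e)$ vertices $v\in e$ and $|c(e\setminus\{v\})| = |c(e)|-1$ for the remaining $r-j(e)$. For the $G$-side: a triple $(e,v,i)$ is counted iff $v$ is one of the $j(e)$ "non-unique" vertices and $i$ is one of the $k-|c(e)|$ colors absent from $c(e)$; hence $\sum_{v,i} G(e,v,i) = j(e)\,(k-|c(e)|)$. For the $L$-side: $(e,v,i)$ is counted iff $v$ is one of the $r-j(e)$ "unique" vertices and $i$ is one of the $|c(e)|-1$ colors in $c(e\setminus\{v\})$; hence $\sum_{v,i} L(e,v,i) = (r-j(e))(|c(e)|-1)$. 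Summing over $e$ and using $|c(e)| \le r - j(e)/2$ from the displayed bound preceding the lemma (to lower-bound $k-|c(e)|$ by $k - r + j(e)/2$) and $|c(e)|-1 \le r-1$ and $r-j(e)\le r$ on the right, I get
\[
\sum_{e\in\E} j(e)\Bigl(k - r + \tfrac{j(e)}{2}\Bigr) \;\le\; \sum_{e\in\E} r\,(r-1),
\]
so that $\sum_e j(e)\bigl(\tfrac{r}{2}+k-1\bigr) \le \sum_e j(e)\bigl(k-r+\tfrac{j(e)}{2}\bigr) + \ldots$; the cleanest route is to bound $k - |c(e)| \ge \tfrac{r}{2}+k-1 - (r-1) \ge$ — careful here — instead I would note $k-|c(e)| \ge k - r + j(e)/2$ and $j(e)\ge 0$, giving $j(e)(k-|c(e)|) \ge j(e)(k-r) + j(e)^2/2$, but to land the stated bound it is cleaner to observe directly that since $|c(e)|-1 \le r - j(e)/2 - 1 < r$, the right side is at most $r(r-1)$ per edge, while on the left $k-|c(e)| \ge \tfrac{r}{2}+k-1 - (r - j(e)/2 ) \ge$ ... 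This bookkeeping is the one place that needs care: the goal is $\hat j\bigl(\tfrac{r}{2}+k-1\bigr) \le |\E|\,r(r-1)$, which rearranges to the claimed $\hat j \le \frac{|\E|r(r-1)}{\frac{r}{2}+k-1}$.

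Part (2) then follows quickly. Since each hyperedge $e$ contributes $|c(e)|$ to $\phi(c)=\sum_e|c(e)|$ and $\sum_{v\in V}u_v(c) = \sum_{e\in\E}\#\{\text{unique-color vertices in }e\} = \sum_{e\in\E}(r - j(e)) = r|\E| - \hat j$, we have $SW(c) = r|\E| - \hat j$. Plugging in the bound from part (1),
\[
SW(c) \;\ge\; r|\E| - \frac{|\E|r(r-1)}{\tfrac{r}{2}+k-1} \;=\; r|\E|\cdot\frac{\tfrac{r}{2}+k-1-(r-1)}{\tfrac{r}{2}+k-1} \;=\; r|\E|\cdot\frac{k-\tfrac{r}{2}}{\tfrac{r}{2}+k-1} \;=\; |\E|\,r\,\frac{2k-r}{2k+r-2},
\]
as asserted. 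The main obstacle, as flagged, is getting the constants exactly right in the double-counting step — in particular making sure the inequality $|c(e)| \le r - j(e)/2$ is applied in the direction that produces the denominator $\tfrac{r}{2}+k-1$ rather than something weaker, and verifying that summing the Nash inequalities over all $(v,i)$ does not overcount or undercount the $G$- and $L$-triples per edge. Everything else is routine algebra.
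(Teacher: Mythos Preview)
Your overall strategy is the paper's strategy: sum the Nash inequality $\sum_e G(e,v,i)\le \sum_e L(e,v,i)$ over all $(v,i)$, evaluate the two sides hyperedge by hyperedge, and then invoke the bound $|c(e)|\le r-j(e)/2$. Your per-edge counts $\sum_{v,i}G(e,v,i)=j(e)(k-|c(e)|)$ and $\sum_{v,i}L(e,v,i)=(r-j(e))(|c(e)|-1)$ are correct, and your derivation of part~(2) via $SW(c)=\sum_e(r-j(e))=r|\E|-\hat j$ is cleaner than the paper's (the paper recovers $SW(c)=r|\E|-\hat j$ only indirectly, by noting that $\sum_{i}\sum_e(L+M)(e,v,i)=(k-1)u_v(c)$).

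The one genuine gap is exactly where you flag it: the bookkeeping in part~(1). The per-edge bounding routes you sketch do not close. Bounding the $L$-side by $(r-j(e))(|c(e)|-1)\le r(r-1)$ throws away the $-j(e)|c(e)|$ term, and then lower-bounding the $G$-side by $j(e)(k-|c(e)|)\ge j(e)(k-r)+j(e)^2/2$ leaves you with a quadratic $\sum_e j(e)^2$ that does not collapse to $\hat j(\tfrac{r}{2}+k-1)$. The fix is not to bound edge by edge but to expand both sides first and let the cross terms cancel: from $\sum_e j(e)(k-|c(e)|)\le \sum_e (r-j(e))(|c(e)|-1)$ one gets, after the $\sum_e j(e)|c(e)|$ terms cancel, $(k-1)\hat j\le r\sum_e|c(e)|-r|\E|$; now a single application of $\sum_e|c(e)|\le r|\E|-\hat j/2$ gives $(k-1)\hat j\le r(r-1)|\E|-\tfrac{r}{2}\hat j$, i.e. $\hat j(\tfrac{r}{2}+k-1)\le r(r-1)|\E|$. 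The paper achieves the same cancellation by a slightly different device: it adds $\sum M(e,v,i)$ to both sides before evaluating, which turns the $L$-side into $\sum_e(r-j(e))(k-1)$ and the $G$-side into $\sum_e r(k-|c(e)|)$, so that each side is already linear and only one use of inequality~\eqref{bound on the sum of C(e)} is needed. Either way, the missing idea is to avoid per-edge estimates and work with the aggregate sums.
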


\begin{proof}
(1) As $c$ is a Nash equilibrium, no vertex $v$ can profit by deviating to some color $i \not=c(v)$. Therefore, for any $v\in V$ and any color $i \not=c(v)$,
$\sum_{e\in \E} L(e,v,i) \ge \sum_{e\in \E} G(e,v,i)$. Summing over the colors and the vertices and changing the order of the summation yields:
$$\sum_{e\in \E}\sum_{\{v:v\in e\} } \sum_i L(e,v,i) \geq \sum_{e\in \E}\sum_{\{v:v\in e\} } \sum_i G(e,v,i).$$

Adding $M(e,v,i)$ on both sides we have:
$$\sum_{e\in \E}\sum_{\{v:v\in e\} } \sum_i (L(e,v,i)+M(e,v,i)) \geq
\sum_{e\in \E}\sum_{\{v:v\in e\} } \sum_i (G(e,v,i)+M(e,v,i)).$$
Note that the left-hand side of the inequality satisfies:
\begin{equation}\label{left-side}
\sum_{e\in \E}\sum_{\{v:v\in e\} } \sum_i (L(e,v,i)+M(e,v,i)) = \sum_{e\in \E} (r-j(e))(k-1) = |\E|r(k-1) -{\hat j}(k-1)
\end{equation}
while the right-hand side satisfies the following:
\begin{equation}\label{right-side}
 \sum_{e\in \E}\sum_{\{v:v\in e\} } \sum_i (G(e,v,i)+M(e,v,i)) = \sum_{e\in \E}r(k-|c(e)|)= rk\cardin{\E}-r\sum_{e \in \E}|c(e)|.
\end{equation}

By Inequality~\ref{bound on the sum of C(e)} this latter quantity is greater than or equal
$|\E|r(k-r)+{\hat j}\frac{r}{2}$.

Hence, $|\E|r(k-1) -{\hat j}(k-1) \geq |\E|r(k-r)+{\hat j}\frac{r}{2}$. Rearranging terms we obtain the asserted bound.

(2) Note that for any $v\in V$ and for any $i\not =c(v)$ the sum $\sum_{e\in \E} L(e,v,i)+M(e,v,i)$ is the utility of $v$ and for
$i =c(v)$ the sum $\sum_{e\in \E} L(e,v,i)+M(e,v,i)$ equals zero. Therefore,
$\sum_i\sum_{e\in \E} L(e,v,i)+M(e,v,i) = (k-1)u_v(c)$
is $k-1$ times the utility of $v$ from the coloring $c$. Summing over all vertices implies that
$$(k-1)SW(c) = \sum_v\sum_i\sum_{e\in \E} ( L(e,v,i)+M(e,v,i)).$$
Hence, by Equation~\ref{left-side} we have:
$$(k-1)SW(c) = |\E|r(k-1) -{\hat j}(k-1).$$

Dividing by $k-1$ on both sides and resorting to the  upper bound we have obtained for $\hat j$ in part 1 of the lemma we can conclude that $SW(c) \geq |\E|r-\frac{|\E|r(r-1)}{\frac{r}{2}+k-1}$. Rearranging terms yields the asserted inequality. This completes the proof.
\end{proof}

We are now ready to proceed with the proof of the upper bounds of Theorem~\ref{poa-cf-new} which follows easily from Lemma~\ref{j-hat-bound}:
\begin{proof}
Let $H$ be an arbitrary $r$-uniform hypergraph. Let $c$ be a Nash-equilibrium $k$-coloring so that $SW(c)$ attends $NE(H)$. By Lemma~\ref{j-hat-bound} we have $SW(c) \geq \cardin{\E}r\frac{2k-r}{2k+r-2}.$
On the other hand for any $k$-coloring $\bar c$, $SW(\bar c) \le |\E|r$, so $O(H)\le |\E|r$ in the case when $k \geq r$ and $SW(\bar c) \le |\E|(k-1)$, so $O(H)\le |\E|(k-1)$ in the case when $k < r$.
So for the case $k \geq r$,
$$PoA(H) = \frac{O(H)}{NE(H)} \leq \frac{|\E|r}{\cardin{\E}r\frac{2k-r}{2k+r-2}} = \frac{2k+r-2}{2k-r}$$
 and for the case $k < r$
 $$PoA(H) = \frac{O(H)}{NE(H)} \leq \frac{|\E|(k-1)}{\cardin{\E}r\frac{2k-r}{2k+r-2}}= \frac{k-1}{r}\cdot \frac{2k+r-2}{2k-r}.$$
This completes the proof of the upper bounds for parts 1 and 2 of Theorem~\ref{poa-cf-new}.
\end{proof}

%

{\bf Proof of the lower bound in part 1 of Theorem~\ref{poa-cf-new}:}\\
We lower bound the price of anarchy using the following construction.
 Consider an $r$-uniform $r$-partite hypergraph $H=(V,\E)$ as follows. $V= \bigcup_{i=1}^r A_i$, where each $A_i = \{v_{i,1},\ldots, v_{i,k}\}$ is a set of cardinality $k$. A hyperedge is any subset of $r$ elements in $V$ consisting of exactly one vertex from each $A_i$ so there are exactly $k^r$ hyperedges and each vertex belongs to exactly $k^{r-1}$ hyperedges. Consider the coloring $c(v_{i,j})=j$. That is, every set $A_i$ is colored with all the $k$ colors.  Note that the utility of all vertices equals $(k-1)^{r-1}$. It is easily seen that when a vertex, say $v_{i,j}$, changes its color to, say $l$, then its utility does not change. Therefore, this coloring is a Nash equilibrium. Thus, the social welfare of this Nash equilibrium is $(kr)(k-1)^{r-1}$.

 On the other hand consider the coloring where all vertices of the set $A_i$ are colored with $i$ (so we use a total of $r$ of the $k$ given colors). Notice that for this coloring the social welfare of every vertex is equal to the number of
hyperedges containing it, that is $k^{r-1}$. So for this coloring, the social welfare is $(kr)k^{r-1}$

Dividing the two gives us the asserted lower bound as
$$PoA(H) = \frac{O(H)}{NE(H)} \geq \frac{(kr)k^{r-1}}{(kr)(k-1)^{r-1}} = (\frac{k}{k-1})^{r-1}$$
where the inequality follows from the fact that $NE(H) \leq (kr)(k-1)^{r-1}$.

{\bf Proof of the lower bound in part 2 of Theorem~\ref{poa-cf-new}:}\\
We  resort to the same example that we use for demonstrating a lower bound for the case $k\geq r$. As before, the coloring
$c(v_{ij})=j$ is a Nash equilibrium with $SW(c)=(kr)(k-1)^{r-1}$.
Consider also the coloring $c'$ where all vertices of the set $A_i$ are colored with $i$ for $1\leq i \leq k-1$ and with $k$ for $k \leq i \leq r$. Note that each vertex in $A_i$ ($1\leq i \leq k-1$) gets a utility of $k^{r-1}$ and each other vertex gets a utility zero. So $SW(c') = k(k-1)k^{r-1}$. Hence: $$PoA(H) = \frac{O(H)}{NE(H)} \geq \frac{k(k-1)k^{r-1}}{kr(k-1)^{r-1}} = \frac{k-1}{r}\left (\frac{k}{k-1}\right )^{r-1}$$
 as asserted.

{\bf Proof of part 3 of Theorem~\ref{poa-cf-new}:} Let $H$ be the hypergraph with $r$ vertices -- all forming the unique hyperedge of $H$.
The socially optimal coloring assigns $k-1$ vertices a unique color and the rest of the vertices the remaining color. On the other hand in a coloring which assigns vertices $(i,2i)$ the color $i$, for $i=1,\ldots,k$ and vertices $2k+1,\ldots,r$ color $1$ forms a Nash equilibrium coloring where no agent gains and so its social welfare is zero.
$\Box$

\section{Discussion and Open Problems}
\label{sec:discussion}
In this paper we consider some natural extensions of  mis-coordination games on graphs to hypergraphs and study equilibrium existence and the price of anarchy. The traditional literature on hypergraph coloring considers centralized algorithms for assigning colors and ignores incentives associated with the vertices. In this paper we study how the overall efficiency is impacted when each vertex is associated with a self-interested agent and the choice of colors is delegated to the individual vertices.

For non-monochromatic seeking agents we provide a tight bound on the PoA for $r$-minimal hypergraphs. In particular, the bound we obtain demonstrates that there is almost no loss of social welfare when decisions are decentralized as long as either the number of colors or the size of the minimal hyperedge is large enough. For conflict-free seeking agents the bound on the PoA we provide is not tight and hence calls for further research.  When the size of the hyperedges is roughly $\alpha$ times the number of available colors ($r=\alpha k$) ) and is large we provide an upper bound of $\frac{2+\alpha}{2-\alpha}$ whenever $\alpha\le 1$ and a bound of $\frac{2+\alpha}{\alpha(2-\alpha)}$ for  $1<\alpha<2$ (for $\alpha\ge 2$ the PoA is infinite). On the other hand the lower bounds we have for large $k$ and $r$ are $\exp^\alpha$ for the former case and $\frac{\exp^\alpha}{\alpha}$ for the latter case. In particular for the case $\alpha=1$ (namely, $k=r$) we have $e \le PoA \le 3$.  We hope that further research will help close this gap.

Recall that an instance of a hypergraphical clustering game is specified by a constituent symmetric game for $r$ players (where $r$ is the size of the hyperedge) and a hypergraph. The PoA studies the ratio between the social welfare in an optimal assignment and that of a worst case Nash equilibrium assignment. A natural choice for a social welfare function is the sum of the utilities; however other natural social welfare functions are plausible. The focus of this paper is on two specific utility functions (non-monochromatic and conflict-free seeking). However, the model of a hypergraphical clustering game lends itself to a variety of interesting research questions, some of which we discuss below.  We view this paper as a humble stepping stone to a potentially rich research domain.

Some natural utility functions left for future research are:
\begin{itemize}
\item
Consider the case where a vertex $v$ enjoys a utility of $1$ from each hyperedge in which it is the {\bf unique} vertex with a unique color. Each such hyperedge represents a situation where the vertex has some monopoly power it can exert.
Unfortunately, nothing meaningful can be said for any pair $r,k$ such that $r>2 \ \wedge \ k>2$ or $r>3\ \wedge \ k=2$.
Indeed, consider a hypergraph with only one hyperedge $e$ with $r$ vertices. Any coloring which has at least two unique colors in $e$ (say all vertices are colored by $3$ except for two vertices $x$ and $y$ such that $x$ is colored with $1$ and $y$ is colored with $2$) forms a Nash equilibrium for which no vertex gains more than zero. On the other hand coloring one vertex by $1$  and all the rest by color $2$ provides a social welfare equal to $1$. Since the ratio between the two is infinite the price of anarchy is unbounded. Nevertheless the {\em price of stability} may yield interesting results.%
\footnote{The price of stability, introduced in \cite{Anshelevich:2003:NND:780542.780617,Correa:2004:SRC:1045756.1045783}, considers the ratio between the optimal social welfare and that obtained in the best equilibrium outcome.}
\item
Assume a vertex enjoys a positive utility if its color is unique; however the utility is proportional to the number of such vertices in an edge. This represents a setting where only vertices of a unique color can enjoy some benefit but this benefit is distributed equally among all those unique ones.
\item
A third natural candidate for a utility function is one where each vertex enjoys a hyperedge whenever that hyperedge has some vertex with a unique color (or maybe even a single vertex with a unique color).
\end{itemize}

Most of the literature on the price of anarchy,  similar to our approach, identifies the societal objectives (the social welfare) with the sum of agents' utilities. However, alternative formulations (resulting in a different PoA) are often more adequate.  Consider, the motivating example of frequency assignment. The overall objective in that case could be to provide service to as many customers as possible. This would translate to maximizing the number of hyperedges containing a vertex whose color is unique. However, from an individual vertex' point of view it would like to maximize the number of hyperedges for which its own color is unique (CF-seeking). Note that in this case the social welfare does not coincide with the societal objective function and consequently the bounds on the induced PoA could be different from those obtained in Theorem \ref{poa-cf-new}.

The choice of domain of hypergraphs is another modeling choice that is orthogonal to the specification of utilities and a social welfare function. In this paper we paid attention to $r$-uniform and $r$-minimal hypergraphs; however there are other natural families of hypergraphs that are of interest. For example, geometric hypergraphs induced by, say, discs in the plane (see, e.g., \cite{CF-survey}) which arise in the context of frequency assignments in wireless networks.

In all of the above the choice of colors is given at the outset. However, there may be applications where the number of colors available to the players is regulated and the social objective is to minimize the number of colors accessible to players. In that case one could study the ratio between the number of colors required for a desired assignment in a centralized vs. the decentralized approach. One can then imagine a natural concept of the {\em `coloring burden of anarchy'}, dual to the PoA. For example, the `coloring burden of anarchy' in the context of a conflict-free coloring is the ratio between the number of colors required to obtain a conflict-free coloring in an equilibrium and that number whenever this is dictated centrally ($\cf(H)$). More broadly the notion of  a `coloring burden of anarchy' could refer to the ratio in the number of colors for obtaining some social criterion between the decentralized and the centralized cases.

\subparagraph*{Acknowledgements}
We wish to thank Chaya Keller for helpful comments on this manuscript. We also wish to thank Ron Holzman for pointing out the lower bound construction in Theorem~\ref{poa-cf-new}.

\bibliographystyle{plain}
\bibliography{biblio}

\end{document}